\theoremstyle{plain}
\newtheorem{theorem}{Theorem}[section]
\newtheorem{proposition}[theorem]{Proposition}
\newtheorem{corollary}[theorem]{Corollary}
\newtheorem{result}[theorem]{Result}
\theoremstyle{definition} 
\newtheorem{definition}[theorem]{Definition}
\newtheorem*{example*}{Example}
\newcommand{\Q}{\mathbb{Q}}
\newcommand{\N}{\mathbb{N}}
\begin{document}
\title[]{Nontrivial solutions to the Fermat equation $x^3+y^3=kz^3$ over quadratic number fields}

\author{Alejandro Arg\'{a}ez-Garc\'{i}a}
\address{Facultad de Matemáticas, Universidad Aut\'{o}noma de Yucat\'{a}n. Perif\'{e}rico Norte Kil\'{o}metro 33.5, Tablaje Catastral 13615 Chuburna de Hidalgo Inn, M\'{e}rida, Yucat\'{a}n, M\'{e}xico. C.P. 97200 }
\email{alejandro.argaez@correo.uady.mx}

\author{Javier Diaz-Vargas}
\address{Facultad de Matemáticas, Universidad Aut\'{o}noma de Yucat\'{a}n. Perif\'{e}rico Norte Kil\'{o}metro 33.5, Tablaje Catastral 13615 Chuburna de Hidalgo Inn, M\'{e}rida, Yucat\'{a}n, M\'{e}xico. C.P. 97200  }
\email{javier.diaz@correo.uady.mx}
\date{\today}

\author{Elí Pech-Moreno}
\address{Facultad de Matemáticas, Universidad Aut\'{o}noma de Yucat\'{a}n. Perif\'{e}rico Norte Kil\'{o}metro 33.5, Tablaje Catastral 13615 Chuburna de Hidalgo Inn, M\'{e}rida, Yucat\'{a}n, M\'{e}xico. C.P. 97200  }
\email{luis.eli.pech@gmail.com}
\date{\today}

\keywords{Fermat equations, elliptic curves} 
\subjclass[2010]{Primary 11G05, Secondary 11D41.} 

\begin{abstract}
We give sufficient conditions to determine the existence of nontrivial solutions to the Fermat equation $x^3+y^3=kz^3$ over $\Q(\sqrt{d})$ by constructing a relationship with the points on the elliptic curve $y^2=x^3-432d^3k^2$ over $\Q$ for certain $k\in\N$.
\end{abstract}
\maketitle

\section{Introduction}

Over the past years, the study of Fermat-type equations and their solutions has become an interesting research topic. When taking the classical Fermat equation $x^n+y^n=z^n$, one can wonder how to generalise it. Moreover, how to compute its solutions and how to classify them.

We can generalise it by modifying its exponents or coefficients and changing its base field of definition. In other words, the generalisation is given by the Fermat equation $Ax^p+By^q=Cz^r$ over any field $K$. Particularly, the equation $Ax^3+Ay^3=Cz^3$ over the quadratic number field $K=\Q(\sqrt{d})$ can be rewritten as $x^3+y^3=kz^3$,  still lying on $K$. 

Furthermore, we can map a solution to a Fermat equation into a point on an elliptic curve. In fact, in Section \ref{sec:proof} we prove the following theorem:

\begin{theorem}\label{teo:general}
Let $d$ and $k$ be integer numbers such that $d$ is squarefree and $k>0$ is cubefree. If there is a nontorsion point on $y^2=x^3-432d^3k^2$ over $\Q$, then there exists a nontrivial solution of $x^3+y^3=kz^3$ over $K$.
\end{theorem}

Moreover, we continue our exploration of the case for $k$, a positive cubefree integer, obtaining a series of criteria to determine sufficient conditions to determine the existence of nontrivial solutions to $ x^3+y^3=kz^3$ over $K$.

\begin{result}
Let $d$ and $k$ be integer numbers such that $d$ is a squarefree and $k$ is a positive cubefree, also $d=2^u 3^m d_6$ and $k=2^v 3^n k_6$ with $\gcd(6,d_6)=\gcd(6,k_6)=1$ and $k_6$ a squarefree integer. Let $y^2=x^3-432d^3k^2$ be defined over $\Q$. Assume the Birch and Swinnerton-Dyer conjecture is true when the elliptic curve rank is greater than $1$. Then, we offer sufficient conditions for nontrivial solutions to  $x^3+y^3=kz^3$ to exist over $K$ summarized in Section \ref{sec:sum}.
\end{result}

On the other hand, in the literature, we find two criteria for the existence of nontrivial solutions when $k=1$. In \cite{A}, the existence of nontrivial solutions to the cubic Fermat equation in $K$ is proved based on the number of solutions of certain ternary quadratic forms and conditioned to the Birch and Swinnerton–Dyer conjecture. On the other hand, in \cite{E}, the existence of nontrivial solutions of the cubic Fermat equation in $K$ is proved based on the discriminant $d_K$ and the class number $h(K)$. In this article, we offer more elementarily computable conditions to determine the existence of nontrivial solutions for the case $k=1$.

\begin{corollary}\label{cor:paperi}
Let $d$ be a squarefree integer and $y^2=x^3-432d^3$ be defined over $\Q$. Assume the Birch and Swinnerton-Dyer conjecture is true when the rank of the elliptic curve is greater than $1$. Then $x^3+y^3=z^3$ has nontrivial solutions over $K$ when $|d|\equiv -1,2,-4,6\pmod 9$.
\end{corollary}

\section{Nontrivial solutions and nontorsion points}\label{sec:proof}

Before we prove Theorem \ref{teo:general}, we will describe all possible trivial solutions to $x^3+y^3=kz^3$ over $K$.

Observe for $x=0$ and $z\neq 0$ that $k=(y/z)^3$, but $k$ is a cubefree integer, so $y=\zeta_3^n z$ and $k=1$. From here, $n=0$ for all $d$ different from $-3$ and $n\in\lbrace 0,1,2\rbrace$ for $d=-3$. Analogously for $y=0$ and $z\neq 0$. Thus $(0,z,z)$ and $(z,0,z)$ are two trivial solutions for $x^3+y^3=z^3$ over $\Q(\sqrt{d})$. Also $(0,\zeta_3^n z,z)$ and $(\zeta_3^n z,0,z)$ are six trivial solutions for $x^3+y^3=z^3$ over $\Q(\sqrt{-3})$.
Also observe, for $z=0$ and $x\neq 0$ we get $x=-y$. So $(x,-x,0)$ is a trivial solution such that $x_0+y_0=0$. Clearly $(0,0,0)$ also is a trivial solution. Now we proceed to prove Theorem \ref{teo:general}:
\begin{proof}
Let $(x_0,y_0)$ be a nontorsion point on $y^2=x^3-432d^3k^2$ over $\Q$, then
\[
\left(\dfrac{y_0}{d^2}\sqrt{d}\right)^2=\left(\dfrac{x_0}{d}\right)^3-432k^2
\]

In this way $(x_1,y_1)=\left(\dfrac{x_0}{d},\dfrac{y_0}{d^2}\sqrt{d}\right)$ is a point on $y^2=x^3-432k^2$ over $K$. 

Thus
\begin{align*}
y_1^2+2^43^3k^2&=x_1^3\\
2^{3}3^{3}y_1^{2}k+2^{7}3^{6}k^3&=2^33^3x_1^3k\\
\left(y_1+36k\right)^{3}+\left(36k-y_1\right)^{3}&=\left(6x_1\right)^3k
\end{align*}
So $(x_2,y_2,z_2)=\left(\dfrac{y_0\sqrt{d}+36kd^2}{d^2},\dfrac{-y_0\sqrt{d}+36kd^2}{d^2},\dfrac{6x_0}{d}\right)$ is a solution to $x^3+y^3=kz^3$ over $K$. Now, suppose $(x_2,y_2,z_2)$ is a trivial solution. Thus
\begin{itemize}
\item When $z_2=0$, then $x_0=0$ and $y_0^2=-2^43^3d^3k^2$, so $y_0=2^23k\sqrt{-3d}$. This is a contradiction unless $d=-3$, in this case $y_0=\pm 36k$, therefore $(x_0,y_0)=(0,\pm 36)$ which is a 3-order $\Q$-point.
\item When $x_2=0$, then $y_0\sqrt{d}=-36kd^2$, so $\sqrt{d}=\dfrac{-36kd^2}{y_0}\in\Q$ which is a contradiction unless $d=1$ and $y_0=-36kd^2$. In this case $72^3k^2=6^3x_0^3$, so $x_0=12\sqrt[3]{k}$, which is a contradiction unless $k=1$. Therefore $(x_0,y_0)=(12,-36)$ which is a 3-order $\Q$-point.
\item When $y_2=0$, we proceed similarly as when $x_2=0$.
\end{itemize}
Therefore $(x_2,y_2,z_2)$ is nontrivial.
\end{proof}

\section{Rank of the elliptic curve $y^2=x^3-432d^3k^2$ over $\Q$}\label{sec:crit}

Let $E_D$ be the elliptic curve defined by
$$E_D\colon y^2=x^3+D$$
where $D$ is an integer. Since many authors have already studied this curve, we have plenty of tools to describe the behaviour of $E_D$. For example, from page 74 of \cite{C}, we have 

\begin{proposition}\label{prop:cambcurv}
Let $D$ be a nonsixth-power free integer, $N$ be an integer such that $D'=\frac{D}{N^{6}}$ is a sixth-power free integer and $\phi_N$ be the isogeny defined by
\begin{equation*}
\phi_N\colon (x,y)\mapsto \left( \frac{x}{N^{2}},\frac{y}{N^{3}}\right)
\end{equation*}
Then $\phi_N$ is an isomorphism between the elliptic curves $E_D$ and $E_{D'}$.
\end{proposition}

As it is known, due to the complex multiplication of the curve $E_D$, the analytic continuation of the $L$-function related to $E_D$, $\Lambda(s)$ fulfils the functional equation $$\Lambda(E_D,s)=w_D\Lambda(E_D,2-s)$$

Observe $\Lambda(E_D,1)=0$, when $w_D=-1$, and in order to determine when $w_D=-1$ we use the following theorem:
\begin{theorem}[\cite{D}]\label{Liv}
Let $D$ be a sixth-power free integer with factorization $D=2^aD_2=3^bD_3$ where $2\nmid D_2$ and $3 \nmid D_3.$ Then, the sign of the functional equation of the elliptic curve $y^2=x^3+D$ is
$$w_D=-w_2 w_3\prod_{\substack{p\vert D\\
                  p\neq 2,3}}w_p,$$
where
\begin{align*}
w_2&=\begin{cases}
-1 & \mbox{when }2\nmid a\\
-1 & \mbox{when }2\mid a,a\neq 4\mbox{ and } D_2\equiv1\pmod4\\
+1 & \mbox{otherwise}
\end{cases}
\end{align*}
\begin{align*}
w_3&=\begin{cases}
-1 & \mbox{when }b\equiv -1\pmod3\\
-1&\mbox{when }3\mid b\mbox{ and } D_3\equiv \pm 2,(-1)^{b+1} \pmod9\\
+1 & \mbox{otherwise}
\end{cases}\\
w_p&=\begin{cases}
-1 & \mbox{when } p\equiv 2 \pmod3,\ p\mbox{ odd prime}\\
+1 & \mbox{otherwise}
\end{cases}
\end{align*}
\end{theorem}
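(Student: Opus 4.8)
The plan is to recognize $W_D$ as the global root number of $E_D$ and to compute it as a product of local root numbers, exploiting the complex multiplication to make each local factor explicit. Recall that, since $E_D$ has CM by $\Z[\omega]$ with $\omega=\tfrac12+\tfrac{\sqrt{-3}}2$, its Hasse--Weil $L$-function factors as $L(E_D,s)=L(\psi,s)$ for a Hecke character $\psi$ of $K=\Q(\omega)=\Q(\sqrt{-3})$ attached to $E_D$ (of infinity type $1$). The completed $L$-function $\xi(E_D,s)$ therefore satisfies the functional equation recorded in the text, whose sign is the global root number $W_D=\prod_v W_v$, the product running over all places $v$ of $\Q$ with $W_v\in\{\pm1\}$ the local root numbers. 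The first step is thus to reduce the statement to the determination of each $W_v$, and to match the product $W_\infty\cdot W_2\cdot W_3\cdot\prod_{p\neq2,3}W_p$ against the displayed formula.

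Next I would dispose of the archimedean place and the primes of good reduction. For an elliptic curve over $\Q$ the archimedean local root number is always $W_\infty=-1$ (the weight-$2$ value $i^2$); this is exactly the global minus sign standing in front of the product. At every prime $p$ of good reduction one has $W_p=+1$, so these contribute trivially. Since $\disc(E_D)=-432D^2$, the curve has good reduction away from the primes dividing $6D$, and the problem collapses to a purely local computation at the primes $p\mid D$ together with $p=2,3$.

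I would then treat the tame bad primes $p\neq2,3$ with $p\mid D$. Because $E_D$ has $j$-invariant $0$, it has potentially good reduction everywhere, so at such $p$ the reduction is additive and potentially good, and inertia acts through a character of order dividing $6$ governed by the sextic twist structure. A standard computation of the local root number for additive potentially good reduction in residue characteristic $\geq5$ (through the associated local $\varepsilon$-factor, equivalently via the Rohrlich formula) shows that $W_p$ depends only on the splitting of $p$ in $K$, i.e. on $p\bmod3$, and is insensitive to $\ord_p D$. One obtains $w_p=-1$ precisely when $p\equiv2\pmod3$ and $w_p=+1$ otherwise, yielding the factor $\prod_{p\neq2,3}w_p$.

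The hard part will be the wild primes $p=2$ and $p=3$, precisely because each divides the order $6$ of the CM automorphisms, making the local Galois representation wildly ramified. For each I would first run Tate's algorithm on $E_D$ to read off the Kodaira type and conductor exponent as a function of the valuation ($a=\ord_2 D$, respectively $b=\ord_3 D$) and of the unit part modulo a small power of $p$ ($D_2\bmod4$, respectively $D_3\bmod9$); this dependence is exactly the source of the congruence conditions appearing in $w_2$ and $w_3$. Then I would evaluate the local root number in each branch from the explicit formulas for additive reduction in residue characteristics $2$ and $3$ (Halberstadt's tables, or the local $\varepsilon$-factor of $\psi$ restricted to the relevant decomposition group), checking that the sign matches the asserted value. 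The case analysis over the finitely many pairs $(a,D_2\bmod4)$ and $(b,D_3\bmod9)$ is routine once the reduction types are pinned down, but verifying these local $\varepsilon$-factors at $2$ and $3$ is the genuine obstacle; combining them with $W_\infty=-1$ and the tame factors then reproduces $W_D=-w_2w_3\prod_{p\neq2,3}w_p$.
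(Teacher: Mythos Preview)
The paper does not prove this theorem; it is quoted verbatim from Liverance \cite{D} and used as a black box. There is therefore nothing to compare your argument against in the paper itself.

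That said, your outline is the right shape and is essentially the strategy Liverance follows: write $W_D$ as the product of local root numbers, use $W_\infty=-1$ for the leading minus sign, discard primes of good reduction, and then compute $W_p$ at bad primes using the CM structure (equivalently, the Hecke character $\psi$ on $\Q(\sqrt{-3})$). Your treatment of the tame primes $p\nmid 6$ is correct in spirit, though note that for $p\equiv1\pmod3$ with $p\mid D$ one still has additive reduction, and the fact that $W_p=+1$ there is a genuine computation, not just ``insensitive to $\ord_p D$'' by default. The real work, as you say, is at $2$ and $3$; Liverance handles these by computing the local $\varepsilon$-factors of $\psi$ directly rather than via Tate's algorithm and Halberstadt, but either route leads to the same finite case analysis in $(a,D_2\bmod4)$ and $(b,D_3\bmod9)$. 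Your proposal is a sound plan, just not something the present paper carries out.
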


In our case, $D=-432d^3k^2$ with $d$ a squarefree integer and $k$ a cubefree integer. We define $d=2^u3^m d_6$ and $k=2^v3^n k_6$, where $\gcd(6,d_6)=\gcd(6,k_6)=1$, $u,m\in\lbrace 0,1\rbrace$ and $v,n\in\lbrace 0,1,2\rbrace$. It follows that $$D=-2^{4+3u+2v}3^{3+3m+2n}d_6^3k_6^2$$
So $a=4+3u+2v$, $b=3+3m+2n$, $D_2=-3^{3+3m+2n}d_6^3k_6^2$ and $D_3=-2^{4+3u+2v}d_6^3k_6^2$.

Observe $d=2^u 3^m d_6$ implies we have four options for $d$ depending on $u$ and $m$, also $k=2^v 3^n k_6$ where we have nine options for $k$ depending on $v$ and $n$. Thus, we have a total of $36$ cases to describe. Furthermore, $a$ and $b$ can be greater than $6$, however due to Proposition \ref{prop:cambcurv} there exists $E_{D'}$ such that $D'$ is sixth-powerfree, and $E_{D'}$ is isomorphic to $E_{-432d^3k^2}$. Then, we can explore any of the two models to describe its rank.

On the other hand, the congruences $D_2\equiv 1\pmod4$ and $D_3\equiv \pm 2, (-1)^{b+1}\pmod9$ hold for both models $E_{-432d^3k^2}$ and $E_{D'}$. We proceed considering $E_{-432d^3k^2}(\Q)$.

\begin{definition}
Let $signature(E_{-432d^3k^2}(\Q))$ be defined as $(u,m,v,n)$ where $d=d=2^u 3^m d_6$ and $k=2^v 3^n k_6$, $\gcd(6,d_6)=\gcd(6,k_6)=1$.
\end{definition}

Applying Theorem \ref{Liv} to $E_{-432d^3k^2}(\Q)$, we have that
\begin{enumerate}
\item When $a$ is odd, then $w_2=-1$. Since $4+3u+2v\equiv u\pmod2$, we have $a$ is odd when $u=1$. Meaning the signature $(1,\cdot,\cdot,\cdot)$ implies $w_2=-1$.
\item When $a=4$, then $w_2=1$. We have $4=4+3u+2v$, implying $0=3u+2v$. Thus $a=4$, when $u=v=0$. Meaning the signature $(0,\cdot,0,\cdot)$ implies $w_2=1$.
\item When $b\equiv -1\pmod3$, then $w_3=-1$. Since $3+3m+2n\equiv -n\pmod3$, thus $b\equiv -1\pmod3$ when $n=1$. Meaning the signature $(\cdot,\cdot,\cdot,1)$ implies $w_3=-1$.
\item When $b\equiv 1\pmod3$, then $w_3=1$. Since $3+3m+2n\equiv -n\pmod3$, thus $b\equiv 1\pmod3$, when $n=2$. Meaning the signature $(\cdot,\cdot,\cdot,2)$ implies $w_3=1$.
\end{enumerate}

Now, we can explore the total cases following a classification based on the previous observations.

\begin{enumerate}[$i)$]
\item When $a$ is odd or $a=4$, and $b\equiv \pm1\pmod 3$. In this first case, the four possible combinations
\begin{enumerate}
\item When $a$ is odd and $b\equiv 1\pmod3$. Since $a$ is odd, then $u=1$ and $w_2=-1$, also since $b\equiv 1\pmod3$, then $n=2$ and $w_3=1$. Thus $(1,\cdot,\cdot,2)$ implies
$$w_D=-w_2w_3\prod w_p=-(-1)(1)\prod w_p=\prod w_p$$
\item When $a$ is odd and $b\equiv -1\pmod3$. Since $a$ is odd, then $u=1$ and $w_2=-1$, also since $b\equiv -1\pmod3$, then $n=1$ and $w_3=-1$. 

Thus $(1,\cdot,\cdot,1)$ implies
$$w_D=-w_2w_3\prod w_p=-(-1)(-1)\prod w_p=-\prod w_p$$
\item When $a=4$ and $b\equiv 1\pmod3$. Since $a=4$, then $u=v=0$ and $w_2=1$, also since $b\equiv 1\pmod3$, then $n=2$ and $w_3=1$. Thus $(0,\cdot,0,2)$ implies
$$w_D=-w_2w_3\prod w_p=-(1)(1)\prod w_p=-\prod w_p$$
\item When $a=4$ and $b\equiv -1\pmod3$. Since $a=4$, then $u=v=0$ and $w_2=1$, also since $b\equiv -1\pmod3$, then $n=1$ and $w_3=-1$. Thus $(0,\cdot,0,1)$ implies
$$w_D=-w_2w_3\prod w_p=-(1)(-1)\prod w_p=\prod w_p$$
\end{enumerate}

Therefore, we can group $w_D$ into two types of signatures
\begin{equation}\label{Sec4_1}
w_D=\begin{cases}
\ \ \prod w_p&\mbox{, when }(1,\cdot,\cdot,2)\mbox{ or }(0,\cdot,0,1)\\
-\prod w_p&\mbox{, when }(1,\cdot,\cdot,1)\mbox{ or }(0,\cdot,0,2)
\end{cases}
\end{equation}

\item When $a$ is even different from $4$ and $b\equiv \pm 1\pmod3$. Since $a$ is even different from $4$. Then $u=0$ and $v=1,2$ ($v\neq 0$). Also, we know that $$D_2=-3^b d_6^3k_6^2\equiv -3^b d_6\equiv -(-1)^bd_6\pmod4$$
Regarding $b$, since $b\equiv \pm 1\pmod3$ we have $n=1,2$ ($n\neq 0$) and $b=3+3m+2n$, then we have four different values for $b$: $5,7,8,10$. We analyse all cases:
\begin{enumerate}
\item When $b$ is even, then $D_2\equiv -d_6\pmod4$. Since $d_6\equiv \pm 1\pmod4$, then $d_6\equiv w_2\pmod4$. So we have two options:
\begin{itemize}
\item When $(0,1,v\neq 0,2)$, then $b=10\equiv 1\pmod3$, so $w_3=1$ and $w_D=-w_2\prod w_p$, thus
\begin{align*}
w_D=&\begin{cases}
\prod w_p&\mbox{, when }d_6\equiv -1\pmod4\\
-\prod w_p&\mbox{, when }d_6\equiv 1\pmod4
\end{cases}
\end{align*}
\item When $(0,1,v\neq 0,1)$, then $b=8\equiv -1\pmod3$, so $w_3=-1$ and $w_D=w_2\prod w_p$, thus
\begin{align*}
w_D=&\begin{cases}
-\prod w_p&\mbox{, when }d_6\equiv -1\pmod4\\
\prod w_p&\mbox{, when }d_6\equiv 1\pmod4
\end{cases}
\end{align*}
\end{itemize}
\item When $b$ is odd, then $D_2\equiv d_6\pmod4$. Since $d_6\equiv \pm 1\pmod4$, then $d_6\equiv -w_2\pmod4$. So we have two options:
\begin{itemize}
\item When $(0,0,v\neq 0,2)$, then $b=7\equiv 1\pmod3$, so $w_3=1$ and $w_D=-w_2\prod w_p$, thus
\begin{align*}
w_D=&\begin{cases}
-\prod w_p&\mbox{, when }d_6\equiv -1\pmod4\\
\prod w_p&\mbox{, when }d_6\equiv 1\pmod4
\end{cases}
\end{align*}

\item When $(0,0,v\neq 0,1)$, then $b=5\equiv -1\pmod3$, so $w_3=-1$ and $w_D=w_2\prod w_p$, thus
\begin{align*}
w_D=&\begin{cases}
\prod w_p&\mbox{, when }d_6\equiv -1\pmod4\\
-\prod w_p&\mbox{, when }d_6\equiv 1\pmod4
\end{cases}
\end{align*}
\end{itemize}
\end{enumerate}

In this way, we can summarise the signatures $(0,\cdot,v\neq 0,n\neq 0)$ as
\begin{equation}\label{Sec4_2}
w_D=\begin{cases}
-(-1)^{m+n}\prod w_p&\mbox{, when }d_6\equiv -1\pmod4\\
(-1)^{m+n}\prod w_p&\mbox{, when }d_6\equiv 1\pmod4
\end{cases}
\end{equation}

\item When $a$ is odd or $a=4$, and $3\vert b$. For $a$ odd, we know $u=1$ and $w_2=-1$ and for $a=4$ we know $u=v=0$ and $w_2=1$. Meanwhile $b=3+3m+2n$, since $3\vert b$, then $n=0$ and $m=0,1$. However $D_3=-2^ad_6^3k_6^2$, thus
\[D_3=-2^ad_6^3k_6^2\equiv -2^a(\pm 1)k_6^2\mod9\]
where $\pm1$ depends on the congruence $d_6\equiv \pm 1\pmod3$.\\
We have the following cases:
\begin{enumerate}

\item When $a=4$ and $b=3$, the signature is $(0,0,0,0)$. Since $a=4$, then $w_2=1$. On the other hand
$$D_3\equiv -2^4(\pm 1)k_6^2\equiv (\pm 1)2k_6^2\pmod9$$
We know $\gcd(k_6,6)=1$, then $k_6\equiv \pm 1,\pm 2,\pm 4\pmod9$. Furthermore, $k_6^2\equiv 1,4,-2\pmod9$ and $2k_6^2\equiv 2,-1,-4\pmod9$ respectively. 

Then 
\begin{equation*}
\resizebox{.9\hsize}{!}{$
D_3\equiv\begin{cases}
+2\pmod9&\mbox{, when }d_6\equiv 1\pmod3\mbox{ and }k_6\equiv\pm1\pmod9\\
-2\pmod9&\mbox{, when }d_6\equiv -1\pmod3\mbox{ and }k_6\equiv\pm1\pmod9\\
-1\pmod9&\mbox{, when }d_6\equiv 1\pmod3\mbox{ and }k_6\equiv\pm2\pmod9\\
+1\pmod9&\mbox{, when }d_6\equiv -1\pmod3\mbox{ and }k_6\equiv\pm2\pmod9\\
-4\pmod9&\mbox{, when }d_6\equiv 1\pmod3\mbox{ and }k_6\equiv\pm4\pmod9\\
+4\pmod9&\mbox{, when }d_6\equiv -1\pmod3\mbox{ and }k_6\equiv\pm4\pmod9
\end{cases}$}
\end{equation*}
Applying Theorem \ref{Liv} to $D=3^bD_3$, we get that
\begin{equation*}
\resizebox{.65\hsize}{!}{$
w_3=\begin{cases}
-1&\mbox{, when }3\vert b\mbox{ and }D_3\equiv \pm2,1\pmod 9\\
+1&\mbox{, when }3\vert b\mbox{ and }D_3\equiv \pm4,-1\pmod 9
\end{cases}$}
\end{equation*}
then
\begin{equation*}
\resizebox{.8\hsize}{!}{$
w_3=\begin{cases}
-1&\mbox{, when }d_6\equiv 1\pmod3\mbox{ and }k_6\equiv\pm1\pmod9\\
-1&\mbox{, when }d_6\equiv -1\pmod3\mbox{ and }k_6\equiv\pm1\pmod9\\
+1&\mbox{, when }d_6\equiv 1\pmod3\mbox{ and }k_6\equiv\pm2\pmod9\\
-1&\mbox{, when }d_6\equiv -1\pmod3\mbox{ and }k_6\equiv\pm2\pmod9\\
+1&\mbox{, when }d_6\equiv 1\pmod3\mbox{ and }k_6\equiv\pm4\pmod9\\
+1&\mbox{, when }d_6\equiv -1\pmod3\mbox{ and }k_6\equiv\pm4\pmod9
\end{cases}$}
\end{equation*}

since $w_D=-w_3\prod w_p$, thus

\begin{equation*}
\resizebox{.8\hsize}{!}{$
w_D=\begin{cases}
+\prod w_p&\mbox{, when }d_6\equiv 1\pmod3\mbox{ and }k_6\equiv\pm1\pmod9\\
+\prod w_p&\mbox{, when }d_6\equiv -1\pmod3\mbox{ and }k_6\equiv\pm1\pmod9\\
-\prod w_p&\mbox{, when }d_6\equiv 1\pmod3\mbox{ and }k_6\equiv\pm2\pmod9\\
+\prod w_p&\mbox{, when }d_6\equiv -1\pmod3\mbox{ and }k_6\equiv\pm2\pmod9\\
-\prod w_p&\mbox{, when }d_6\equiv 1\pmod3\mbox{ and }k_6\equiv\pm4\pmod9\\
-\prod w_p&\mbox{, when }d_6\equiv -1\pmod3\mbox{ and }k_6\equiv\pm4\pmod9
\end{cases}$}
\end{equation*}
Therefore, for $a=4$ and $b=3$ (signature $(0,0,0,0)$)
\begin{equation*}
\resizebox{.8\hsize}{!}{$
w_D=\begin{cases}
+\prod w_p&\mbox{, when }d_6\equiv \pm1\pmod3\mbox{ and }k_6\equiv\pm1\pmod9\\
+\prod w_p&\mbox{, when }d_6\equiv -1\pmod3\mbox{ and }k_6\equiv\pm2\pmod9\\
-\prod w_p&\mbox{, when }d_6\equiv 1\pmod3\mbox{ and }k_6\equiv\pm2\pmod9\\
-\prod w_p&\mbox{, when }d_6\equiv \pm1\pmod3\mbox{ and }k_6\equiv\pm4\pmod9
\end{cases}$}
\end{equation*}
\item Replicating the analysis done in $(a)$ for $a=4$ and $b=6$ (signature $(0,1,0,0)$), we conclude:
\begin{equation*}
\resizebox{.8\hsize}{!}{$
w_D=\begin{cases}
+\prod w_p&\mbox{, when }d_6\equiv \pm1\pmod3\mbox{ and }k_6\equiv\pm1\pmod9\\
+\prod w_p&\mbox{, when }d_6\equiv 1\pmod3\mbox{ and }k_6\equiv\pm2\pmod9\\
-\prod w_p&\mbox{, when }d_6\equiv -1\pmod3\mbox{ and }k_6\equiv\pm2\pmod9\\
-\prod w_p&\mbox{, when }d_6\equiv \pm1\pmod3\mbox{ and }k_6\equiv\pm4\pmod9
\end{cases}$}
\end{equation*}

\item Replicating the analysis done in $(a)$ for $a=7$ and $b=3$ (signature is $(1,0,0,0)$), we conclude:
\begin{equation*}
\resizebox{.8\hsize}{!}{$
w_D=\begin{cases}
+\prod w_p&\mbox{, when }d_6\equiv \pm1\pmod3\mbox{ and }k_6\equiv\pm4\pmod9\\
+\prod w_p&\mbox{, when }d_6\equiv -1\pmod3\mbox{ and }k_6\equiv\pm2\pmod9\\
-\prod w_p&\mbox{, when }d_6\equiv 1\pmod3\mbox{ and }k_6\equiv\pm2\pmod9\\
-\prod w_p&\mbox{, when }d_6\equiv \pm1\pmod3\mbox{ and }k_6\equiv\pm1\pmod9
\end{cases}$}
\end{equation*}

\item Replicating the analysis done in $(a)$ for $a=7$ and $b=6$ (signature $(1,1,0,0)$), lead us to:
\begin{equation*}
\resizebox{.8\hsize}{!}{$
w_D=\begin{cases}
+\prod w_p&\mbox{, when }d_6\equiv \pm1\pmod3\mbox{ and }k_6\equiv\pm4\pmod9\\
+\prod w_p&\mbox{, when }d_6\equiv 1\pmod3\mbox{ and }k_6\equiv\pm2\pmod9\\
-\prod w_p&\mbox{, when }d_6\equiv -1\pmod3\mbox{ and }k_6\equiv\pm2\pmod9\\
-\prod w_p&\mbox{, when }d_6\equiv \pm1\pmod3\mbox{ and }k_6\equiv\pm1\pmod9
\end{cases}$}
\end{equation*}

\item Replicating the analysis done in $(a)$ for $a=9$ and $b=3$ (signature is $(1,0,1,0)$), we get:
\begin{equation*}
\resizebox{.8\hsize}{!}{$
w_D=\begin{cases}
+\prod w_p&\mbox{when }d_6\equiv \pm1\pmod3\mbox{ and }k_6\equiv\pm2\pmod9\\
+\prod w_p&\mbox{when }d_6\equiv -1\pmod3\mbox{ and }k_6\equiv\pm1\pmod9\\
-\prod w_p&\mbox{when }d_6\equiv 1\pmod3\mbox{ and }k_6\equiv\pm1\pmod9\\
-\prod w_p&\mbox{when }d_6\equiv \pm1\pmod3\mbox{ and }k_6\equiv\pm4\pmod9
\end{cases}$}
\end{equation*}

\item Replicating the analysis done in $(a)$ for $a=9$ and $b=6$, here the signature is $(1,1,1,0)$), we conclude:
\begin{equation*}
\resizebox{.8\hsize}{!}{$
w_D=\begin{cases}
+\prod w_p&\mbox{when }d_6\equiv \pm1\pmod3\mbox{ and }k_6\equiv\pm2\pmod9\\
+\prod w_p&\mbox{when }d_6\equiv 1\pmod3\mbox{ and }k_6\equiv\pm1\pmod9\\
-\prod w_p&\mbox{when }d_6\equiv -1\pmod3\mbox{ and }k_6\equiv\pm1\pmod9\\
-\prod w_p&\mbox{when }d_6\equiv \pm1\pmod3\mbox{ and }k_6\equiv\pm4\pmod9
\end{cases}$}
\end{equation*}

\item Replicating the analysis done in $(a)$ for $a=11$ and $b=3$ (signature is $(1,0,2,0)$), we conclude:
\begin{equation*}
\resizebox{.8\hsize}{!}{$
w_D=\begin{cases}
+\prod w_p&\mbox{when }d_6\equiv \pm1\pmod3\mbox{ and }k_6\equiv\pm1\pmod9\\
+\prod w_p&\mbox{when }d_6\equiv -1\pmod3\mbox{ and }k_6\equiv\pm4\pmod9\\
-\prod w_p&\mbox{when }d_6\equiv 1\pmod3\mbox{ and }k_6\equiv\pm4\pmod9\\
-\prod w_p&\mbox{when }d_6\equiv \pm1\pmod3\mbox{ and }k_6\equiv\pm2\pmod9
\end{cases}$}
\end{equation*}

\item Replicating the analysis done in $(a)$ for $a=11$ and $b=6$ (signature is $(1,1,2,0)$), we get:
\begin{equation*}
\resizebox{.8\hsize}{!}{$
w_D=\begin{cases}
+\prod w_p&\mbox{when }d_6\equiv \pm1\pmod3\mbox{ and }k_6\equiv\pm1\pmod9\\
+\prod w_p&\mbox{when }d_6\equiv 1\pmod3\mbox{ and }k_6\equiv\pm4\pmod9\\
-\prod w_p&\mbox{when }d_6\equiv -1\pmod3\mbox{ and }k_6\equiv\pm4\pmod9\\
-\prod w_p&\mbox{when }d_6\equiv \pm1\pmod3\mbox{ and }k_6\equiv\pm2\pmod9
\end{cases}$}
\end{equation*}
\end{enumerate}

In this way, for the signature $(\cdot, \cdot,0,0)$, we have that
\begin{equation}\label{Sec4_3}
\resizebox{.8\hsize}{!}{$
w_D=\begin{cases}
(-1)^u\prod w_p&\mbox{when }d_6\equiv \pm 1\pmod3\mbox{ and }k_6\equiv \pm 1\pmod9\\
(-1)^{u+1}\prod w_p&\mbox{when }d_6\equiv \pm 1\pmod3\mbox{ and }k_6\equiv \pm 4\pmod9\\
(-1)^{m+1}\prod w_p&\mbox{when }d_6\equiv 1\pmod3\mbox{ and }k_6\equiv \pm 2\pmod9\\
(-1)^m\prod w_p&\mbox{when }d_6\equiv -1\pmod3\mbox{ and }k_6\equiv \pm 2\pmod9
\end{cases}$}
\end{equation}

Finally, for the signature $(1, \cdot,v\neq 0,0)$ we have that
\begin{equation}\label{Sec4_4}
\resizebox{.9\hsize}{!}{$
w_D=\begin{cases}
+\prod w_p&\mbox{when }d_6\equiv \pm 1\pmod3\mbox{ and }k_6\equiv \pm 2\cdot 4^{v-1}\pmod9\\
+\prod w_p&\mbox{when }d_6\equiv (-1)^{m+1}\pmod3\mbox{ and }k_6\equiv \pm 1\cdot 4^{v-1}\pmod9\\
-\prod w_p&\mbox{when }d_6\equiv (-1)^m\pmod3\mbox{ and }k_6\equiv \pm 1\cdot 4^{v-1}\pmod9\\
-\prod w_p&\mbox{when }d_6\equiv \pm 1\pmod3\mbox{ and }k_6\equiv \pm 4\cdot 4^{v-1}\pmod9
\end{cases}$}
\end{equation}

\item When $a\neq 4$ and $3\vert b$, then $u=0$ and $v=1,2$ $(v\neq 0)$. Also, since $3\vert b$, then $n=0$ and $m=0,1$. So, we have four different signatures: $(0,0,1,0)$, $(0,0,2,0)$, $(0,1,1,0)$ and $(0,1,2,0)$. On the other hand,
\begin{align*}
D_2&=-3^bd_6^3k_6^2\equiv -(-1)^bd_6\pmod4\\
D_3&=-2^ad_6^3k_6^2\equiv -2^a(\pm 1)k_6^2\pmod9
\end{align*}
where $(\pm 1)$ depends on the congruence of $d_6$ module $3$. Now
\begin{enumerate}
\item When $(0,0,1,0)$, for $a=4+3u+2v=6$ and $b=3+3m+2n=3$
\begin{align*}
D_2&\equiv d_6\pmod4\\
D_3&\equiv -(\pm 1)k_6^2\pmod9
\end{align*}
where $(\pm 1)$ depends on the congruence of $d_6$ on module $3$.
By Theorem \ref{Liv}, we know that
\begin{equation*}
\resizebox{.5\hsize}{!}{$
w_2=\begin{cases}
-1&\mbox{, when }d_6\equiv \phantom{-}1\pmod4\\
+1&\mbox{, when }d_6\equiv -1\pmod4
\end{cases}$}
\end{equation*}
On the other hand, $k_6\equiv \pm1,\pm2,\pm4\pmod9$ since $\gcd(k_6,6)=1$, so $k_6^2\equiv 1,4,-2\pmod9$ respectively, moreover $-k_6^2\equiv -1,-4,2\pmod9$, thus
\begin{equation*}
\resizebox{.8\hsize}{!}{$
D_3=\begin{cases}
-1\pmod9&\mbox{when }k_6\equiv \pm 1\pmod9\mbox{ and }d_6\equiv 1\pmod3\\
\phantom{-}1\pmod9&\mbox{when }k_6\equiv \pm 1\pmod9\mbox{ and }d_6\equiv -1\pmod3\\
-4\pmod9&\mbox{when }k_6\equiv \pm 2\pmod9\mbox{ and }d_6\equiv 1\pmod3\\
\phantom{-}4\pmod9&\mbox{when }k_6\equiv \pm 2\pmod9\mbox{ and }d_6\equiv -1\pmod3\\
\phantom{-}2\pmod9&\mbox{when }k_6\equiv \pm 4\pmod9\mbox{ and }d_6\equiv 1\pmod3\\
-2\pmod9&\mbox{when }k_6\equiv \pm 4\pmod9\mbox{ and }d_6\equiv -1\pmod3
\end{cases}$}
\end{equation*}
Applying Theorem \ref{Liv} to $D=3^3D_3$, we get
\begin{equation*}
\resizebox{.6\hsize}{!}{$
w_3=\begin{cases}
-1&\mbox{when }3\vert b\mbox{ and }D_3\equiv \pm2, 1\pmod9\\
\phantom{-}1&\mbox{when }3\vert b\mbox{ and }D_3\equiv \pm4, -1\pmod9
\end{cases}$}
\end{equation*}
thus
\begin{equation*}
\resizebox{.7\hsize}{!}{$
w_3=\begin{cases}
\phantom{-}1&\mbox{when }k_6\equiv \pm 1\pmod9\mbox{ and }d_6\equiv 1\pmod3\\
-1&\mbox{when }k_6\equiv \pm 1\pmod9\mbox{ and }d_6\equiv -1\pmod3\\
\phantom{-}1&\mbox{when }k_6\equiv \pm 2\pmod9\mbox{ and }d_6\equiv 1\pmod3\\
\phantom{-}1&\mbox{when }k_6\equiv \pm 2\pmod9\mbox{ and }d_6\equiv -1\pmod3\\
-1&\mbox{when }k_6\equiv \pm 4\pmod9\mbox{ and }d_6\equiv 1\pmod3\\
-1&\mbox{when }k_6\equiv \pm 4\pmod9\mbox{ and }d_6\equiv -1\pmod3
\end{cases}$}
\end{equation*}
Observe that
\begin{equation*}
\resizebox{.8\hsize}{!}{$
w_2w_3=\begin{cases}
-1&\mbox{when }k_6\equiv \pm 1\pmod9\mbox{ and }d_6\equiv \pm 1\pmod{12}\\
\phantom{-}1&\mbox{when }k_6\equiv \pm 1\pmod9\mbox{ and }d_6\equiv \pm 5\pmod{12}\\
-1&\mbox{when }k_6\equiv \pm 2\pmod9\mbox{ and }d_6\equiv 1,5\pmod{12}\\
\phantom{-}1&\mbox{when }k_6\equiv \pm 2\pmod9\mbox{ and }d_6\equiv -1,-5\pmod{12}\\
\phantom{-}1&\mbox{when }k_6\equiv \pm 4\pmod9\mbox{ and }d_6\equiv 1,5\pmod{12}\\
-1&\mbox{when }k_6\equiv \pm 4\pmod9\mbox{ and }d_6\equiv -1,-5\pmod{12}
\end{cases}$}
\end{equation*}
It is important to remark that $w_D=\pm\prod w_p$ depending on $w_2w_3=\mp1$.  Therefore, we have the following cases:
\begin{equation*}
\resizebox{.8\hsize}{!}{$
w_D=\begin{cases}
-\prod w_p&\mbox{when }k_6\equiv \pm 1\pmod9\mbox{ and }d_6\equiv \pm 5\pmod{12}\\
-\prod w_p&\mbox{when }k_6\equiv \pm 2\pmod9\mbox{ and }d_6\equiv -1,-5\pmod{12}\\
-\prod w_p&\mbox{when }k_6\equiv \pm 4\pmod9\mbox{ and }d_6\equiv 1,5\pmod{12}\\
\prod w_p&\mbox{when }k_6\equiv \pm 1\pmod9\mbox{ and }d_6\equiv \pm 1\pmod{12}\\
\prod w_p&\mbox{when }k_6\equiv \pm 2\pmod9\mbox{ and }d_6\equiv 1,5\pmod{12}\\
\prod w_p&\mbox{when }k_6\equiv \pm 4\pmod9\mbox{ and }d_6\equiv -1,-5\pmod{12}
\end{cases}$}
\end{equation*}
\item Replicating the analysis done in $(a)$ for the signature $(0,0,2,0)$ lead us to the following cases:
\begin{equation*}
\resizebox{.8\hsize}{!}{$
w_D=\begin{cases}
-\prod w_p&\mbox{when }k_6\equiv \pm 1\pmod9\mbox{ and }d_6\equiv -1,-5\pmod{12}\\
-\prod w_p&\mbox{when }k_6\equiv \pm 2\pmod9\mbox{ and }d_6\equiv 1,5\pmod{12}\\
-\prod w_p&\mbox{when }k_6\equiv \pm 4\pmod9\mbox{ and }d_6\equiv \pm 5\pmod{12}\\
\prod w_p&\mbox{when }k_6\equiv \pm 1\pmod9\mbox{ and }d_6\equiv 1,5\pmod{12}\\
\prod w_p&\mbox{when }k_6\equiv \pm 2\pmod9\mbox{ and }d_6\equiv -1,-5\pmod{12}\\
\prod w_p&\mbox{when }k_6\equiv \pm 4\pmod9\mbox{ and }d_6\equiv \pm 1\pmod{12}
\end{cases}$}
\end{equation*}
\item Replicating the analysis done in $(a)$ for the signature $(0,1,1,0)$ we get the following cases:
\begin{equation*}
\resizebox{.8\hsize}{!}{$
w_D=\begin{cases}
-\prod w_p&\mbox{when }k_6\equiv \pm 1\pmod9\mbox{ and }d_6\equiv \pm 5\pmod{12}\\
-\prod w_p&\mbox{when }k_6\equiv \pm 2\pmod9\mbox{ and }d_6\equiv 1,5\pmod{12}\\
-\prod w_p&\mbox{when }k_6\equiv \pm 4\pmod9\mbox{ and }d_6\equiv -1,-5\pmod{12}\\
\prod w_p&\mbox{when }k_6\equiv \pm 1\pmod9\mbox{ and }d_6\equiv \pm 1\pmod{12}\\
\prod w_p&\mbox{when }k_6\equiv \pm 2\pmod9\mbox{ and }d_6\equiv -1,-5\pmod{12}\\
\prod w_p&\mbox{when }k_6\equiv \pm 4\pmod9\mbox{ and }d_6\equiv 1,5\pmod{12}
\end{cases}$}
\end{equation*}
\item Finally when signature is $(0,1,2,0)$ we get the following cases:
\begin{equation*}
\resizebox{.8\hsize}{!}{$
w_D=\begin{cases}
-\prod w_p&\mbox{when }k_6\equiv \pm 1\pmod9\mbox{ and }d_6\equiv 1, 5\pmod{12}\\
-\prod w_p&\mbox{when }k_6\equiv \pm 2\pmod9\mbox{ and }d_6\equiv -1,-5\pmod{12}\\
-\prod w_p&\mbox{when }k_6\equiv \pm 4\pmod9\mbox{ and }d_6\equiv \pm 5\pmod{12}\\
\prod w_p&\mbox{when }k_6\equiv \pm 1\pmod9\mbox{ and }d_6\equiv -1,-5\pmod{12}\\
\prod w_p&\mbox{when }k_6\equiv \pm 2\pmod9\mbox{ and }d_6\equiv 1,5\pmod{12}\\
\prod w_p&\mbox{when }k_6\equiv \pm 4\pmod9\mbox{ and }d_6\equiv \pm 1\pmod{12}
\end{cases}$}
\end{equation*}
\end{enumerate}
In this way, for the signature $(0,\cdot,v\neq 0,0)$, we have that
\begin{equation}\label{Sec4_5}
\resizebox{0.9\hsize}{!}{$w_D=\begin{cases}
-\prod w_p&\mbox{when }k_6\equiv \pm 2^{2v-2}\pmod9\mbox{ and }d_6\equiv \pm 5\pmod{12}\\
(-1)^{m+1}\prod w_p&\mbox{when }k_6\equiv \pm 2^{2v-1}\pmod9\mbox{ and }d_6\equiv -1,-5\pmod{12}\\
(-1)^{m+1}\prod w_p&\mbox{when }k_6\equiv \pm 2^{2v}\pmod9\mbox{ and }d_6\equiv 1, 5\pmod{12}\\
\prod w_p&\mbox{when }k_6\equiv \pm 2^{2v-2}\pmod9\mbox{ and }d_6\equiv \pm 1\pmod{12}\\
(-1)^{m}\prod w_p&\mbox{when }k_6\equiv \pm 2^{2v-1}\pmod9\mbox{ and }d_6\equiv 1,5\pmod{12}\\
(-1)^{m}\prod w_p&\mbox{when }k_6\equiv \pm 2^{2v}\pmod9\mbox{ and }d_6\equiv -1,-5\pmod{12}
\end{cases}$}
\end{equation}
\end{enumerate}

\section{A particular case for $\prod w_p$}\label{sec:sum}

Following the notation of Theorem \ref{Liv}, we define $D_6=\frac{D}{2^a3^b}$. According to Theorem \ref{Liv} for every prime $p\vert D_6$, $w_p=-1$ if and only if $p\equiv -1\pmod3$, meaning $w_p\equiv p\pmod3$. Thus
\[
\displaystyle\prod_{p\vert D_6}w_p\equiv\prod_{\substack{p\vert D_6\\
                  p\equiv -1\pmod3}}p\pmod3
\]
In our case $D_6=d_6^3k_6^2$, then $D_6\equiv d_6\pmod3$. As we can notice, this is not enough to relate this congruence to the parity of the number of primes dividing $D_6$ and congruent to $-1$ module $3$ since all the primes dividing $k_6$ vanished from the congruence. 

Observe $k_6=k_{61}k_{62}^2$ where $\gcd(k_{61},k_{62})=1$ and both are squarefree integers. Thus, two possible cases keep all primes in congruence; we have either $\gcd(d_6,k_{61})=1$ and $k_{62}=1$, or $\gcd(d_6,k_{61})=1$ and $\gcd(d_6,k_{62})=k_{62}$. In both cases, the following congruence holds:
\begin{align}\label{eqn:cong}
\prod_{p\vert D_6}w_p&\equiv |d_6|k_6\pmod3
\end{align}
From now on, we assume $\gcd(d_6,k_{61})=1$ and $k_{62}=1$. 

\begin{theorem}\label{thm:table}
Let $d$ be a squarefree integer and $k$ a cubefree integer such that $k>0$. Also, let $d=2^u3^md_6$ and $k=2^v3^nk_6$ with $\gcd(6,d_6)=\gcd(6,k_6)=\gcd(d_6,k_6)=1$ and $k_6$ a squarefree integer. Consider $(u,m,v,n)$ the signature of the elliptic curve $y^2=x^3-432d^3k^2$ defined over $\Q$. Assume the Birch and Swinnerton-Dyer conjecture is true when the elliptic curve rank is greater than 1. Then, there exist nontrivial solutions to $x^3+y^3=kz^3$ over $\Q(\sqrt{d})$ according to the following conditions:

\begin{center}
\begin{adjustbox}{height=10 cm, width = 11 cm}
\begin{tabular}{|c|c|}
\hline
Signature & Criteria\\
\hline
$\begin{array}{cc}
(1,0,0,1)&(1,0,1,1)\\
(1,0,2,1)&(1,1,0,1)\\
(1,1,1,1)&(1,1,2,1)\\
(0,0,0,2)&(0,1,0,2)
\end{array}$
& $|d_6|k_6\equiv 1\pmod3$\\
\hline
$\begin{array}{cc}
(1,0,0,2)&(1,0,1,2)\\
(1,0,2,2)&(1,1,0,2)\\
(1,1,1,2)&(1,1,2,2)\\
(0,0,0,1)&(0,1,0,1)
\end{array}$
& $|d_6|k_6\equiv -1\pmod3$\\
\hline
$\begin{array}{cc}
(0,1,1,1)&(0,0,1,2)\\
(0,0,2,2)&(0,1,2,1)
\end{array}$
&
$
\begin{array}{c}
d_6\equiv -1\pmod4\mbox{ and }|d_6|k_6\equiv 1\pmod3, or\\
d_6\equiv 1\pmod4\mbox{ and }|d_6|k_6\equiv -1\pmod3
\end{array}$
\\
\hline
$\begin{array}{cc}
(0,0,1,1)&(0,1,1,2)\\
(0,1,2,2)&(0,0,2,1)
\end{array}$
&
$\begin{array}{c}
d_6\equiv -1\pmod4\mbox{ and }|d_6|k_6\equiv -1\pmod3, or\\
d_6\equiv 1\pmod4\mbox{ and }|d_6|k_6\equiv 1\pmod3\\
\end{array}$\\
\hline
$(0,0,0,0)$&
$\begin{array}{c}
|d_6|\equiv -1\pmod3\mbox{ and }k_6\equiv 1,-4\pmod9, or\\
|d_6|\equiv 1\pmod3\mbox{ and }k_6\equiv -1,4\pmod9, or\\
d_6>0, d\equiv\pm 1\pmod3\mbox{ and }k_6\equiv-2\pmod9, or\\
d_6<0, d\equiv\pm 1\pmod3\mbox{ and }k_6\equiv 2\pmod9\\
\end{array}$\\
\hline
$(0,1,0,0)$&
$\begin{array}{c}
|d_6|\equiv -1\pmod3\mbox{ and }k_6\equiv 1,-4\pmod9, or\\
|d_6|\equiv 1\pmod3\mbox{ and }k_6\equiv -1,4\pmod9, or\\
d_6>0, d\equiv\pm 1\pmod3\mbox{ and }k_6\equiv 2\pmod9, or\\
d_6<0, d\equiv\pm 1\pmod3\mbox{ and }k_6\equiv-2\pmod9\\
\end{array}$\\
\hline
$(1,0,0,0)$&
$\begin{array}{c}
|d_6|\equiv -1\pmod3\mbox{ and }k_6\equiv -1,4\pmod9, or\\
|d_6|\equiv 1\pmod3\mbox{ and }k_6\equiv 1,-4\pmod9, or\\
d_6>0, d\equiv\pm 1\pmod3\mbox{ and }k_6\equiv-2\pmod9, or\\
d_6<0, d\equiv\pm 1\pmod3\mbox{ and }k_6\equiv 2\pmod9\\
\end{array}$\\
\hline
$(1,1,0,0)$&
$\begin{array}{c}
|d_6|\equiv -1\pmod3\mbox{ and }k_6\equiv -1,4\pmod9, or\\
|d_6|\equiv 1\pmod3\mbox{ and }k_6\equiv 1,-4\pmod9, or\\
d_6>0, d\equiv\pm 1\pmod3\mbox{ and }k_6\equiv 2\pmod9, or\\
d_6<0, d\equiv\pm 1\pmod3\mbox{ and }k_6\equiv -2\pmod9\\
\end{array}$\\
\hline
$(1,0,1,0)$&
$\begin{array}{c}
|d_6|\equiv 1\pmod3\mbox{ and }k_6\equiv 2,4\pmod9, or\\
|d_6|\equiv -1\pmod3\mbox{ and }k_6\equiv -2,-4\pmod9, or\\
d_6>0\mbox{ and }k_6\equiv 1\pmod9, or\\
d_6<0\mbox{ and }k_6\equiv -1\pmod9\\
\end{array}$\\
\hline
$(1,0,2,0)$&
$\begin{array}{c}
|d_6|\equiv 1\pmod3\mbox{ and }k_6\equiv -1,-2\pmod9, or\\
|d_6|\equiv -1\pmod3\mbox{ and }k_6\equiv 1,2\pmod9, or\\
d_6>0\mbox{ and }k_6\equiv 4\pmod9, or\\
d_6<0\mbox{ and }k_6\equiv -4\pmod9\\
\end{array}$\\
\hline
$(1,1,1,0)$&
$\begin{array}{c}
|d_6|\equiv 1\pmod3\mbox{ and }k_6\equiv 2,4\pmod9, or\\
|d_6|\equiv -1\pmod3\mbox{ and }k_6\equiv -2,-4\pmod9, or\\
d_6>0\mbox{ and }k_6\equiv -1\pmod9, or\\
d_6<0\mbox{ and }k_6\equiv 1\pmod9\\
\end{array}$\\
\hline
$(1,1,2,0)$&
$\begin{array}{c}
|d_6|\equiv 1\pmod3\mbox{ and }k_6\equiv -1,-2\pmod9, or\\
|d_6|\equiv -1\pmod3\mbox{ and }k_6\equiv 1,2\pmod9, or\\
d_6>0\mbox{ and }k_6\equiv -4\pmod9, or\\
d_6<0\mbox{ and }k_6\equiv 4\pmod9\\
\end{array}$\\
\hline
\end{tabular}
\end{adjustbox}
\end{center}

\begin{center}
\begin{adjustbox}{height = 5 cm, width = 9.5 cm}
\begin{tabular}{|c|c|}
\hline
Signature & Criteria\\
\hline
$(0,0,1,0)$&
$\begin{array}{c}
|d_6|\equiv 1,5\pmod{12}\mbox{ and }k_6\equiv -1\pmod9, or\\
|d_6|\equiv -1,-5\pmod{12}\mbox{ and }k_6\equiv 1\pmod9, or\\
d_6>0,\ d_6\equiv\pm 1\pmod{12}\mbox{ and }k_6\equiv 2,4\pmod9, or\\
d_6>0,\ d_6\equiv\pm 5\pmod{12}\mbox{ and }k_6\equiv -2,-4\pmod9, or\\
d_6<0,\ d_6\equiv\pm 1\pmod{12}\mbox{ and }k_6\equiv 1,2\pmod9, or\\
d_6<0,\ d_6\equiv\pm 5\pmod{12}\mbox{ and }k_6\equiv -1,-2\pmod9\\
\end{array}$\\
\hline
$(0,0,2,0)$&
$\begin{array}{c}
|d_6|\equiv 1,5\pmod{12}\mbox{ and }k_6\equiv -4\pmod9, or\\
|d_6|\equiv -1,-5\pmod{12}\mbox{ and }k_6\equiv 4\pmod9, or\\
d_6>0,\ d_6\equiv\pm 1\pmod{12}\mbox{ and }k_6\equiv -1,-2\pmod9, or\\
d_6>0,\ d_6\equiv\pm 5\pmod{12}\mbox{ and }k_6\equiv 1,2\pmod9, or\\
d_6<0,\ d_6\equiv\pm 1\pmod{12}\mbox{ and }k_6\equiv 1,2\pmod9, or\\
d_6<0,\ d_6\equiv\pm 5\pmod{12}\mbox{ and }k_6\equiv -1,-2\pmod9\\
\end{array}$\\
\hline
$(0,1,1,0)$&
$\begin{array}{c}
|d_6|\equiv 1,5\pmod{12}\mbox{ and }k_6\equiv -1\pmod9, or\\
|d_6|\equiv -1,-5\pmod{12}\mbox{ and }k_6\equiv 1\pmod9, or\\
d_6>0,\ d_6\equiv\pm 1\pmod{12}\mbox{ and }k_6\equiv -2,-4\pmod9, or\\
d_6>0,\ d_6\equiv\pm 5\pmod{12}\mbox{ and }k_6\equiv 2,4\pmod9, or\\
d_6<0,\ d_6\equiv\pm 1\pmod{12}\mbox{ and }k_6\equiv 2,4\pmod9, or\\
d_6<0,\ d_6\equiv\pm 5\pmod{12}\mbox{ and }k_6\equiv -2,-4\pmod9\\
\end{array}$\\
\hline
$(0,1,2,0)$&
$\begin{array}{c}
|d_6|\equiv 1,5\pmod{12}\mbox{ and }k_6\equiv -4\pmod9, or\\
|d_6|\equiv -1,-5\pmod{12}\mbox{ and }k_6\equiv 4\pmod9, or\\
d_6>0,\ d_6\equiv\pm 1\pmod{12}\mbox{ and }k_6\equiv 1,2\pmod9, or\\
d_6>0,\ d_6\equiv\pm 5\pmod{12}\mbox{ and }k_6\equiv -1,-2\pmod9, or\\
d_6<0,\ d_6\equiv\pm 1\pmod{12}\mbox{ and }k_6\equiv -1,-2\pmod9, or\\
d_6<0,\ d_6\equiv\pm 5\pmod{12}\mbox{ and }k_6\equiv 1,2\pmod9\\
\end{array}$\\
\hline
\end{tabular}
\end{adjustbox}
\end{center}
\end{theorem}

\begin{proof}
According to Theorem \ref{teo:general} and considering that for $\Lambda(E_D,1)=0$, a sufficient condition is $w_D=-1$, then it is enough to apply the criteria developed for each group of signatures developed in Section \ref{sec:crit} as follows:

\begin{enumerate}[$i)$]
\item For $(1,\cdot,\cdot,n\neq 0)$ or $(0,\cdot,0,n\neq 0)$ we apply (\ref{eqn:cong}) in (\ref{Sec4_1}) obtaining:
\begin{equation*}
\resizebox{.6\hsize}{!}{$
w_D=\begin{cases}
(-1)^{u+n+1} &\mbox{, when }|d_6|k_6\equiv 1\pmod3\\
(-1)^{u+n} &\mbox{, when }|d_6|k_6\equiv -1\pmod3
\end{cases}$}
\end{equation*}
\item For $(0,\cdot,v\neq 0,n\neq 0)$  we apply (\ref{eqn:cong}) in (\ref{Sec4_2}) obtaining:
\begin{equation*}
\resizebox{.8\hsize}{!}{$
w_D\equiv\begin{cases}
(-1)^{m+n+1}&\mbox{, when }d_6\equiv -1\pmod4\mbox{ and }|d_6|k_6\equiv 1\pmod3\\
(-1)^{m+n}&\mbox{, when }d_6\equiv -1\pmod4\mbox{ and }|d_6|k_6\equiv -1\pmod3\\
(-1)^{m+n}&\mbox{, when }d_6\equiv 1\pmod4\mbox{ and }|d_6|k_6\equiv 1\pmod3\\
(-1)^{m+n+1}&\mbox{, when }d_6\equiv 1\pmod4\mbox{ and }|d_6|k_6\equiv -1\pmod3
\end{cases}$}
\end{equation*}
\item 
\begin{enumerate}
\item For $(\cdot, \cdot,0,0)$ we apply (\ref{eqn:cong}) in (\ref{Sec4_3}) obtaining:
\begin{equation*}
\resizebox{.9\hsize}{!}{$
w_D\equiv\begin{cases}
(-1)^u |d_6|k_6\pmod3&\mbox{when }k_6\equiv \pm 1\pmod9\\
(-1)^{u+1} |d_6|k_6\pmod3&\mbox{when }k_6\equiv \pm 4\pmod9\\
(-1)^{m+1} |d_6|k_6\pmod3&\mbox{when }d_6\equiv 1\pmod3\mbox{ and }k_6\equiv \pm 2\pmod9\\
(-1)^m |d_6|k_6\pmod3&\mbox{when }d_6\equiv -1\pmod3\mbox{ and }k_6\equiv \pm 2\pmod9
\end{cases}$}
\end{equation*}
\item For $(1, \cdot,v\neq 0,0)$ we apply (\ref{eqn:cong}) in (\ref{Sec4_4}) obtaining:
\begin{equation*}
\resizebox{.9\hsize}{!}{$
w_D\equiv\begin{cases}
\ |d_6|k_6\pmod3&\mbox{when }k_6\equiv \pm 2\cdot 4^{v-1}\pmod9\\
\ |d_6|k_6\pmod3&\mbox{when }d_6\equiv (-1)^{m+1}\pmod3\mbox{ and }k_6\equiv \pm 1\cdot 4^{v-1}\pmod9\\
-|d_6|k_6\pmod3&\mbox{when }d_6\equiv (-1)^m\pmod3\mbox{ and }k_6\equiv \pm 1\cdot 4^{v-1}\pmod9\\
-|d_6|k_6\pmod3&\mbox{when }k_6\equiv \pm 4\cdot 4^{v-1}\pmod9
\end{cases}$}
\end{equation*}
\end{enumerate}
\item For $(0,\cdot,v\neq 0,0)$ we apply (\ref{eqn:cong}) in (\ref{Sec4_5}) obtaining:
\begin{equation*}
\resizebox{.9\hsize}{!}{$
w_D\equiv\begin{cases}
|d_6|k_6\pmod3&\mbox{when }k_6\equiv \pm 2^{2v-2}\pmod9\mbox{ and }d_6\equiv \pm 1\pmod{12}\\
-|d_6|k_6\pmod3&\mbox{when }k_6\equiv \pm 2^{2v-2}\pmod9\mbox{ and }d_6\equiv \pm 5\pmod{12}\\
(-1)^{m}|d_6|k_6\pmod3&\mbox{when }k_6\equiv \pm 2^{2v-1}\pmod9\mbox{ and }d_6\equiv 1,5\pmod{12}\\
(-1)^{m+1}|d_6|k_6\pmod3&\mbox{when }k_6\equiv \pm 2^{2v-1}\pmod9\mbox{ and }d_6\equiv -1,-5\pmod{12}\\
(-1)^{m+1}|d_6|k_6\pmod3&\mbox{when }k_6\equiv \pm 2^{2v}\pmod9\mbox{ and }d_6\equiv 1, 5\pmod{12}\\
(-1)^{m}|d_6|k_6\pmod3&\mbox{when }k_6\equiv \pm 2^{2v}\pmod9\mbox{ and }d_6\equiv -1,-5\pmod{12}
\end{cases}$}
\end{equation*}
\end{enumerate}
Now, we determine for each criterion when $w_D=-1$ obtaining the table above.
\end{proof}

Finally, we give the proof for Corollary \ref{cor:paperi}.
\begin{proof}
When $k=1$ then signatures are $(\cdot,\cdot,0,0)$, then we apply Theorem \ref{thm:table}, so we get four cases:
\begin{enumerate}
\item When $(0,0,0,0)$, $|d|=|d_6|\equiv -1\pmod3$, which implies $|d|\equiv -1,2,-4\pmod9$.
\item When $(0,1,0,0)$, $d=3d_6$ and $|d_6|\equiv -1\pmod3$, then $|d|=3|d_6|\equiv -3,6,-12\pmod9$, so $|d|\equiv 6\pmod9$.
\item When $(1,0,0,0)$, $d=2d_6$ and $|d_6|\equiv 1\pmod3$, so $|d|\equiv 2,-4,-1\pmod9$.
\item When $(1,1,0,0)$, $d=6d_6$ and $|d_6|\equiv 1\pmod3$, then $|d|\equiv 6,24,-12\pmod9$, so $|d|\equiv 6\pmod9$.
\end{enumerate}
\end{proof}

\section*{Acknowledgement}
\thispagestyle{empty}
The first author was partially supported by SNI - CONACYT. The third author was supported by the PhD Grant 783130 CONACYT - Government of Mexico.

\end{document}